\newtheorem{theorem}{Theorem}[section]
\newtheorem{corollary}[theorem]{Corollary} 
\newenvironment{proof}[1][Proof]{\textbf{#1.} }{\  
 \rule{0.5em}{0.5em}\\}
\newcommand{\R}{\mathbb{R}}
\newcommand{\intbar}{-\kern-1em\int}
\title{A note on the existence of global solutions for reaction-diffusion equations with almost-monotonic
  nonlinearities}
 \author{An\'{\i}bal Rodr\'{\i}guez-Bernal \footnote{Dpto.\ de
    Matem\'atica Aplicada. Univ.\ Complutense de Madrid and Instituto de Ciencias
    Matem\'aticas CSIC-UAM-UC3M-UCM. Spain. arober@mat.ucm.es} \and Alejandro
  Vidal-L\'opez \footnote{Mathematics Insitute.  Univ.\ of
    Warwick. UK. A.Vidal-Lopez@warwick.ac.uk}}
\newcommand{\eps}{\varepsilon}
\renewcommand{\epsilon}{\varepsilon}
\begin{document}

\maketitle

\section{Introduction}

Let us consider the following problem
  \begin{equation}
  \label{eq:pb}
  \left\{\begin{array}{rclcl}
    u_t  - \Delta u  & =
    & f(x,u) & \mathrm{in} & \Omega\\
u&=& 0 &
    \mathrm{on} & \Gamma\\
    u(0) &=& u_0 & \mathrm{in} & \Omega
  \end{array}\right.
\end{equation}
in a bounded domain $\Omega \subset \R^N$, $N\geq 1$, with $f$
 of the form
  \begin{equation} \label{eq:decomposition_f}
    f(x,u) = g(x) + m(x) u + f_0(x,u), \quad x\in \Omega, \ u \in \R, 
  \end{equation}
  where 
  \begin{equation}  \label{eq:assumptions_g_m}
  g \in L^{q_{0}}(\Omega), \   \mbox{with
    $N/2 < q_{0} < \infty$}, \quad m\in L^{r_{0}}(\Omega)
  \ \mbox{with $N/2< r_{0} \leq \infty$}
\end{equation}
and $f_0$ is a Carath\'eodory function, locally Lipschitz in the second
variable in a $L^{q_0}(\Omega)$ form with respect to $x\in\Omega$ , i.e, such
that for $|u|\leq R$ and $|v|\leq R$ we have
\begin{displaymath}
|f_{0}(x,u)-f_{0}(x,v)| \leq L_{0}(x,R) |u-v|
\end{displaymath}
with $0\leq L_{0} (\cdot, R)   \in L^{q_{0}} (\Omega)$ for each $R>0$ ,
and with $f_0(\cdot,0) = \partial_u f_0(\cdot,0) = 0$. 

Our goal here is to prove global existence and uniqueness of solutions for initial
data $u_{0}\in L^{q}(\Omega)$ for any $1\leq q < \infty$ under the
only additional assumption  that $f_{0}(x,u)$ is  \emph{almost monotonic}, i.e, to
satisfy the following condition
\begin{equation}
  \label{eq:f:almostMonotonic}
  \partial_u f_{0} (x,u) \leq L(x), \quad \mathrm{for\ all}\quad x\in
  \Omega,  \ u\in \R
\end{equation}
for some $L\in L^{\sigma_{0}}(\Omega)$, $\sigma_{0}>N/2$. Note that this implies 
\begin{equation}
  \label{eq:f:signCond}
  uf(x,u)\leq C(x) u^2 +D(x)|u|, \quad x\in \Omega, \ u \in \R, 
\end{equation}
for $C= m + L \in L^{\sigma}(\Omega)$, $\sigma=\min\{r_{0},
\sigma_{0}\} > N/2$ and   $0\leq D=|g| \in L^{q_{0}}(\Omega)$.

 \section{Preliminary results} 
 \label{sec:known-results-subcr}
\setcounter{equation}{0}

Notice that, the Nemytskii operator associated to $F(x,u)=g(x) +
f_0(x,u)$ maps bounded sets of $L^\infty(\Omega)$ into bounded sets of
$L^{q_0}(\Omega)$ and so we have the existence of local solutions of
problem (\ref{eq:pb}) for smooth initial data. Namely, we have (see
\cite{henry81:_geomet})

\begin{theorem}
  \label{th:localExistence}
Assume that $f$ is as in
(\ref{eq:decomposition_f})--(\ref{eq:assumptions_g_m}).   
If $\alpha <1$ is such that $2 \alpha - \frac{N}{q_{0}}>0$ , then for any
initial data 
$u_0 \in H^{2\alpha,q_{0}}(\Omega) \cap H^{1,q_{0}}_0(\Omega)$ there exists a unique local 
solution of (\ref{eq:pb}) with initial data $u_0$, with  $u(\cdot;u_0)\in
C([0,\tau);H^{2\alpha,q_{0}}(\Omega) \cap H^{1,q_{0}}_0(\Omega)) \cap
C((0,\tau);H^{2,q_{0}}(\Omega))$ and $u_{t} \in
C((0,\tau);H^{2\gamma,q_{0}}(\Omega) \cap H^{1,q_{0}}_0(\Omega))$ for
any $\gamma <1$, for some $\tau$ depending on  $u_{0}$. 

The solution  satisfies the Variation
of Constant Formula,
\begin{displaymath}
u(t;u_0) = S_{m}(t) u_0 + \int_0^t S_{m}(t-s) \big( g + f_{0}(u(s;u_0)) \big) \,\mathrm{d}s
\end{displaymath}
where $S_{m}$ denotes the semigroup generated by $\Delta+m(x) I$ with Dirichlet boundary
conditions.  
\end{theorem}

Note that since $q_{0} >N/2$, $H^{2\alpha,q_{0}}(\Omega) \cap
H^{1,q_{0}}_0(\Omega) \subset C_{0}^{\eta}(\overline{\Omega})$ for any $0 \leq
\eta < 2-N/q_{0}$. In particular the solution satisfies $u\in C([0,\infty)
\times \overline{\Omega})$. Also, if $g$ and $L_0(\cdot,R)$, for each $R$, are a
bounded functions, then the above holds for any $q_{0} > N/2$.

It is known that condition (\ref{eq:f:signCond}) ensures the global existence of
the local solutions in Theorem \ref{th:localExistence} (see \cite{Arrieta:2004p863} and
\cite{Rodr'iguez-Bernal2008}). Namely, we have
\begin{theorem}
  \label{th:globalExist}
  Assume that $f$ is as in
  (\ref{eq:decomposition_f})--(\ref{eq:assumptions_g_m}) and satisfies
  (\ref{eq:f:signCond}), that is  
\begin{displaymath}
  uf(x,u)\leq C(x) u^2 +D(x)|u|, \quad x\in \Omega, \ u \in \R, 
\end{displaymath}
for some $C\in  L^{\sigma}(\Omega)$ and   $0\leq D \in
L^{\rho}(\Omega)$ with  $\sigma, \rho > N/2$.  

Then for the solutions in Theorem \ref{th:localExistence} one has
$\tau= \infty$.   
\end{theorem}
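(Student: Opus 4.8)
The plan is to prove global existence by establishing an a priori bound on $\|u(t;u_0)\|_{L^\infty}$ (or at least in some $L^q$ space that, via smoothing, controls the $H^{2\alpha,q_0}$ norm) that grows at most exponentially and hence cannot blow up in finite time. The starting point is the sign condition (\ref{eq:f:signCond}), which I would turn into energy-type differential inequalities for the $L^p$ norms of the solution.

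First I would multiply the equation by $|u|^{p-2}u$ and integrate over $\Omega$. The diffusion term $-\Delta u$ contributes a nonnegative (indeed coercive) term after integration by parts, using the Dirichlet boundary condition, which I would discard or exploit via the Poincaré/Sobolev inequality. The reaction term is handled by (\ref{eq:f:signCond}): one gets $\frac{1}{p}\frac{d}{dt}\int |u|^p \leq \int C(x)|u|^p + \int D(x)|u|^{p-1}$. The crucial point is that $C\in L^\sigma$ and $D\in L^\rho$ with $\sigma,\rho>N/2$, so these potential-type terms are subordinate to the Laplacian. Concretely, I would estimate $\int C|u|^p$ by splitting $C = C\mathbf{1}_{\{|C|\leq k\}} + C\mathbf{1}_{\{|C|>k\}}$; the bounded part gives a harmless $k\int|u|^p$, while the unbounded tail, by Hölder and the Sobolev embedding $H^1_0 \hookrightarrow L^{2N/(N-2)}$ applied to $|u|^{p/2}$, can be absorbed into the coercive gradient term once $k$ is chosen large (this is exactly where $\sigma>N/2$ is needed so the tail $\|C\mathbf{1}_{\{|C|>k\}}\|_{L^\sigma}\to 0$). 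The term with $D$ is treated similarly, or simply by Young's inequality to convert $|u|^{p-1}$ into $|u|^p$ plus a lower-order constant.

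After absorption I expect a Gronwall-type inequality of the form $\frac{d}{dt}\|u\|_{L^p}^p \leq a_p \|u\|_{L^p}^p + b_p$, yielding an exponential-in-time bound on $\|u(t)\|_{L^p}$ on any finite interval, uniformly as long as the solution exists. To upgrade this to an $L^\infty$ bound (needed to reenter Theorem \ref{th:localExistence} and rule out blow-up of the $H^{2\alpha,q_0}$ norm), I would either iterate the $L^p$ estimates as $p\to\infty$ in a Moser/Alikakos fashion, tracking the constants $a_p,b_p$, or—more cleanly—use the Variation of Constants Formula together with the smoothing estimates of the linear semigroup $S_m(t)$ to bootstrap the finite $L^p$ bound into a bound in $C^\eta(\overline\Omega)$. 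Since the local solution is already continuous up to $t=0$ for smooth data, finiteness of $\sup_{[0,T]}\|u(t)\|_{L^\infty}$ for every $T$ prevents the maximal existence time $\tau$ from being finite.

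The main obstacle is the absorption of the singular potential terms $\int C|u|^p$ and $\int D|u|^{p-1}$ into the diffusion, handled by the truncation-plus-Sobolev argument above and critically requiring $\sigma,\rho>N/2$; getting the $p$-dependence of the resulting constants explicit enough that the Moser iteration closes (rather than blows the constant up) is the delicate quantitative point. I would expect this to be precisely where the references \cite{Arrieta:2004p863} and \cite{Rodr'iguez-Bernal2008} are invoked, so in practice the cleanest route is to verify that our hypotheses match theirs and cite the global-existence mechanism directly, while sketching the energy estimate as the underlying reason it works.
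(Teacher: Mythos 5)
The paper itself contains no proof of this theorem: it is quoted as a known result from \cite{Arrieta:2004p863} and \cite{Rodr'iguez-Bernal2008}, whose proofs (as remark (ii) of the paper's final section points out) are based on exactly the kind of $L^p$ energy estimates you sketch. So your main line --- multiply by $|u|^{p-2}u$, use the sign condition, absorb $\int_\Omega C|u|^p$ by truncation plus Sobolev applied to $|u|^{p/2}$ (this is where $\sigma>N/2$ enters), Gronwall, then an Alikakos--Moser iteration to reach $L^\infty$, and finally use the $L^\infty$ bound to keep the $H^{2\alpha,q_0}$ norm finite and rule out finite-time blow-up --- is the same mechanism the paper relies on, and your decision to ultimately defer the quantitative iteration to those references is exactly what the paper does.

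There are, however, two caveats. The significant one: your ``more cleanly'' alternative --- bootstrapping a finite-$p$ bound $\|u(t)\|_{L^p}\le M_p$ through the variation of constants formula and the smoothing of $S_m(t)$ --- cannot work in this setting. The whole point of the paper is that $f_0$ obeys no growth restriction, so a bound on $\|u(s)\|_{L^p}$ for finite $p$ gives no bound whatsoever on $\|f_0(\cdot,u(s))\|_{L^r}$ for any $r$; for instance $f_0(u)=-u\,\mathrm{e}^{u^2}$ satisfies (\ref{eq:f:almostMonotonic}) with $L\equiv 0$, yet $f_0(\cdot,u)$ is uncontrollable by any $L^p$ norm of $u$. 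The local Lipschitz hypothesis only controls $f_0(\cdot,u)$ in $L^{q_0}(\Omega)$ through $\|u\|_{L^\infty(\Omega)}$, so the passage from finite $p$ to $p=\infty$ must be made by an argument that never evaluates the nonlinearity: either the Moser iteration on the differential inequalities (which involve only the fixed potentials $C$ and $D$), or --- equivalently and more in the spirit of this paper --- comparison: by (\ref{eq:f:signCond}), $\pm U$ are sub/supersolutions of (\ref{eq:pb}) where $U$ solves the linear problem $U_t-\Delta U=C(x)U+D(x)$, $U(0)=|u_0|$, and linear theory for potentials in $L^\sigma(\Omega)$, $\sigma>N/2$, gives the $L^\infty$ bound for $t>0$ directly; this is precisely the device used in Step 2 of the proof of Theorem \ref{thr:truncated_problem_well_posed}. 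The minor caveat: for the term $\int_\Omega D|u|^{p-1}$, plain Young's inequality would require $D\in L^p(\Omega)$ for arbitrarily large $p$, which is not available; use instead $|u|^{p-1}\le 1+|u|^p$ and treat $\int_\Omega D|u|^p$ by the same truncation-plus-Sobolev argument as $\int_\Omega C|u|^p$.
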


\section{Existence in $L^q(\Omega)$, $1\leq q<\infty$}
\label{sec:local-exist-lr-supercrit}
\setcounter{equation}{0}

The goal of the section is to prove main result in this work. Namely, the
existence of a unique solution of problem (\ref{eq:pb}) starting at $u_0\in
L^q(\Omega)$ for any $1\leq q<\infty$.

\begin{theorem} 
\label{thr:truncated_problem_well_posed}
Let $1 \leq q < \infty$.  Suppose that $f$ is as in
(\ref{eq:decomposition_f})--(\ref{eq:assumptions_g_m}) and $f_{0}$ satisfies
(\ref{eq:f:almostMonotonic}) with $L\in L^{\sigma_{0}}(\Omega)$,
$\sigma_{0} > N/2$. Then, for
any $u_{0} \in L^{q}(\Omega)$, there exists a solution of (\ref{eq:pb}) defined
for all $t\geq 0$, $u$, such that
\begin{displaymath}
  u  \in C([0,\infty);L^q(\Omega)) \cap
  C((0,\infty);H^{2,q_{0}}(\Omega)\cap H^{1,q_{0}}_0(\Omega)),\quad   u(0) =
  u_{0}  
\end{displaymath}  
and satisfies  
\begin{equation} 
  \label{eq:VCF:th:exists}
    u(t) = S_{m}(t)u_0+ \int_0^t S_{m}(t-s) \big( g + f_{0}(\cdot,u(s)) \big)\,\mathrm{d}s  
\end{equation}
where $S_{m}$ denotes the semigroup generated by $\Delta+m(x) I$ with Dirichlet boundary
conditions. 

Moreover, for each $T>0$ there exists $c(T)$ such that 
\begin{equation}
  \label{eq:Linfty_bounds}
  |u(t,x)| \leq c(T) \left(1+    t^{-\frac{N}{2q}}
    \|u_{0}\|_{L^q(\Omega)} \right) ,
  \qquad 0 <t \leq T  \quad  \textrm{for all  } x \in
  \overline{\Omega} . 
\end{equation}
\end{theorem}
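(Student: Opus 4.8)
The plan is to approximate the rough initial datum $u_0 \in L^q(\Omega)$ by smooth data, obtain global solutions for the approximations via Theorems~\ref{th:localExistence} and~\ref{th:globalExist}, derive uniform smoothing estimates, and pass to the limit.

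The plan is to approximate the rough datum $u_0\in L^q(\Omega)$ by smooth data, to solve (\ref{eq:pb}) globally for the approximations by means of Theorems \ref{th:localExistence} and \ref{th:globalExist}, to establish uniform smoothing and contraction estimates resting on the sign condition (\ref{eq:f:signCond}) and the almost-monotonicity (\ref{eq:f:almostMonotonic}), and finally to pass to the limit. First I would choose $u_0^n\in H^{2\alpha,q_0}(\Omega)\cap H^{1,q_0}_0(\Omega)$ (for instance $u_0^n\in C_0^\infty(\Omega)$) with $u_0^n\to u_0$ in $L^q(\Omega)$ and $\sup_n\|u_0^n\|_{L^q(\Omega)}<\infty$. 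Theorem \ref{th:localExistence} then produces a local solution for each $u_0^n$, and since (\ref{eq:f:almostMonotonic}) implies (\ref{eq:f:signCond}), Theorem \ref{th:globalExist} makes each of them global; I denote these solutions $u^n$.

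The heart of the argument is a comparison estimate for $|u^n|$. Using Kato's inequality together with the sign condition, $v^n:=|u^n|$ is a subsolution of the linear problem
\[
  v_t-\Delta v = C(x)\,v + D(x),\qquad v|_\Gamma=0,\qquad v(0)=|u_0^n|,
\]
with $C=m+L\in L^\sigma(\Omega)$, $\sigma>N/2$, and $0\le D=|g|\in L^{q_0}(\Omega)$; indeed $\partial_t|u^n|-\Delta|u^n|\le \sg(u^n)(\partial_t u^n-\Delta u^n)=\sg(u^n)f(x,u^n)\le C|u^n|+D$ by (\ref{eq:f:signCond}). By the parabolic comparison principle $0\le v^n(t)\le \overline v^{\,n}(t)$, where
\[
  \overline v^{\,n}(t)=S_C(t)|u_0^n|+\int_0^t S_C(t-s)D\,\mathrm{d}s
\]
and $S_C$ is the semigroup generated by $\Delta+C(x)$. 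Since $\sigma,q_0>N/2$, the potential $C$ is a sufficiently subordinate perturbation that $S_C$ retains the heat-semigroup smoothing, so that $\|S_C(t)\phi\|_{L^\infty(\Omega)}\le M(T)\,t^{-\frac{N}{2q}}\|\phi\|_{L^q(\Omega)}$ for $0<t\le T$, while $q_0>N/2$ makes $\int_0^t\|S_C(t-s)D\|_{L^\infty(\Omega)}\,\mathrm{d}s\le c(T)$ convergent. Taking $L^\infty$ norms yields (\ref{eq:Linfty_bounds}) uniformly in $n$, and taking $L^q$ norms yields $\sup_{[0,T]}\|u^n(t)\|_{L^q(\Omega)}\le M(T)\|u_0^n\|_{L^q(\Omega)}+c(T)$, a uniform $L^q$ bound up to $t=0$.

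The same comparison, now driven by the almost-monotonicity, controls differences. For $w=u^n-u^k$ one has $f(x,u^n)-f(x,u^k)=a(x)\,w$ with $a=m+\partial_u f_0(x,\xi)\le m+L=C$ by (\ref{eq:f:almostMonotonic}), so $\sg(w)\,(f(x,u^n)-f(x,u^k))=a\,|w|\le C\,|w|$ and $|w|$ is a subsolution of $v_t-\Delta v=C(x)v$ with $v(0)=|u_0^n-u_0^k|$. Comparison then gives
\[
  \|u^n(t)-u^k(t)\|_{L^q(\Omega)}\le \|\,S_C(t)\,|u_0^n-u_0^k|\,\|_{L^q(\Omega)}\le M(T)\,\|u_0^n-u_0^k\|_{L^q(\Omega)},
\]
valid for every $1\le q<\infty$ at once and avoiding any $|w|^{q-2}w$ test-function computation. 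Hence $\{u^n\}$ is Cauchy in $C([0,T];L^q(\Omega))$ for each $T$ and converges to some $u\in C([0,\infty);L^q(\Omega))$ with $u(0)=u_0$, and (\ref{eq:Linfty_bounds}) passes to the limit. Finally, for $t>0$ the uniform $L^\infty$, hence $L^{q_0}$, bound lets me fix $t_0>0$, treat $u(t_0)\in L^{q_0}(\Omega)$ as data, and use the smoothing in the variation of constants formula to obtain $u\in C((0,\infty);H^{2,q_0}(\Omega)\cap H^{1,q_0}_0(\Omega))$ and to pass to the limit in (\ref{eq:VCF:th:exists}).

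The main obstacle is the rigorous justification of the comparison step: establishing Kato's inequality and the comparison principle for the linear problem with a merely $L^\sigma(\Omega)$ potential $C$, and confirming that $S_C$ keeps the heat-kernel rate $t^{-N/2q}$. For $t>0$ the approximate solutions are strong (Theorem \ref{th:localExistence}), so Kato's inequality applies there, and continuity in $L^q(\Omega)$ with $v^n(0)=|u_0^n|$ handles $t\to 0^+$; the smoothing of $S_C$ for $\sigma,q_0>N/2$ is the perturbation estimate already underlying Theorem \ref{th:globalExist} and the cited references. Once these ingredients are secured, the remainder is a routine limiting argument.
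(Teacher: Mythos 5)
Your proposal is correct in substance, and its architecture coincides with the paper's: approximate $u_0$ by smooth data, use Theorems \ref{th:localExistence} and \ref{th:globalExist} (via (\ref{eq:f:signCond})) to get global smooth solutions $u^n$, bound them by the solution of the linear problem $U_t-\Delta U = C(x)U+D(x)$ with $C=m+L$ to obtain (\ref{eq:Linfty_bounds}) uniformly in $n$, prove a contraction estimate for differences using $\partial_u f_0\le L$ so that $\{u^n\}$ is Cauchy in $C([0,T];L^q(\Omega))$, and finally pass to the limit and bootstrap regularity. Where you genuinely differ is the tool used to justify the two comparison steps: you run both through Kato's inequality, turning $|u^n|$ and $|u^n-u^k|$ into distributional subsolutions of linear problems with the fixed potential $C=m+L$. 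The paper deliberately does not do this: for the Cauchy step it regards $v_{n,k}=u_n-u_k$ as a genuine solution of the linear equation with time-dependent potential $m(x)+C_{n,k}(t,x)$, $C_{n,k}\le L$, and applies the comparison principle twice (first $|z(t,0;z_0)|\le z(t,0;|z_0|)$, then monotonicity with respect to the potential to dominate by the $m+L$ problem); for the $L^\infty$ bound it observes that $U^n$ and $-U^n$ are super- and subsolutions of the nonlinear problem (\ref{eq:pb}) itself. This is not a cosmetic difference: remark (ii) of the paper states that avoiding Kato's inequality --- particularly for $L^1$ data, where Kato is the classical device --- is precisely a selling point of the method, citing \cite{arendt84:_katos} for the equivalence of Kato's inequality with positivity of semigroups. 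Your route is the classical one and is workable, since for $t>0$ the approximating solutions are strong solutions so Kato's inequality applies there; it also buys you something: you never need solvability of the linear problem with the time-dependent potential $C_{n,k}\in L^\infty((0,T);L^{q_0}(\Omega))$, only the comparison principle for weak subsolutions of the autonomous problem with $L^\sigma$ potential. Conversely, the paper's route stays entirely at the level of comparison of genuine solutions and never needs distributional inequalities for the non-smooth functions $|u^n|$, $|u^n-u^k|$. Finally, two of your closing steps are compressed relative to the paper: the identification of the limit as a solution of (\ref{eq:VCF:th:exists}) (the paper passes to the limit in a weak formulation tested against $\phi\in H^{2,q_0'}(\Omega)\cap H^{1,q_0'}_0(\Omega)$, invokes \cite{Ball1977} to recover the variation of constants formula on $[\eps,t]$, and then lets $\eps\to 0$), and the $C((0,\infty);H^{2,q_{0}}(\Omega))$ regularity (obtained in the paper by noting that for $t\ge\eps$ the limit coincides with the unique solution of Theorem \ref{th:localExistence}); both are routine given the uniform $L^\infty$ bounds and the $L^{q_0}$-Lipschitz property of $f_0$ that you have already established, but they should be spelled out.
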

\begin{proof}
  We proceed in several steps. In the first step, fixed $1\leq q<\infty$, we
  construct a Cauchy sequence of approximating solutions.
  Then, we obtain a uniform
  $L^\infty(\Omega)$ bound for the approximating sequence. In a third step, we
  show that the limit of the approximating solutions is a solution of the limit
  problem (notice that such limit exists since the approximating solutions forms
  a Cauchy sequence). Finally, we show how to obtain more regularity of the
  solution constructed in the previous steps.

Without loss of generality we can assume that $L$
  in (\ref{eq:f:almostMonotonic}) is non-negative.
  
\medskip 
\noindent {\bf Step 1. Approximate the initial data.} 
Let $\alpha <1$ such that $2\alpha - \frac{N}{q_{0}}>0$. Then, by Theorem
\ref{th:localExistence}, the problem (\ref{eq:pb}) is well-posed in
$H^{2\alpha,q_{0}}(\Omega) \cap H^{1,q_{0}}_0(\Omega)$. Also, since
$f$ satisfies (\ref{eq:f:signCond}), the 
solutions are globally defined  for $t>0$, see Theorem
\ref{th:globalExist} and (\ref{eq:f:signCond}). 

Hence, for any $1\leq q < \infty$ and $ u_0 \in L^q(\Omega)$, we can
take smooth enough initial data $u_0^n \in 
H^{2\alpha,q_{0}}(\Omega) \cap H^{1,q_{0}}_0(\Omega)$ such that
$u_0^n \to u_0$ in 
$L^q(\Omega)$ as $n\to\infty$ and consider the solutions of (\ref{eq:pb})
starting at $u^n_0$. We define $u_n(t) = u(t;u^n_0)$.

Let $v_{n,k}(t) = u_n(t) - u_k(t)$. Subtracting equations
for $u_n$ and $u_k$, we have 
\begin{displaymath}
  \left\{\begin{array}{rclcl}
  \partial_t v_{n,k}(t) -\Delta v_{n,k}(t)&=& 
m(x) v_{n,k}(t) + f_{0}(x,u_n(t)) -
  f_{0}(x,u_k(t)) & \mathrm{in} & \Omega \\
v_{n,k} &= & 0 &     \mathrm{on}  & \Gamma\\
  v_{n,k}(0) &=& u^0_n - u^0_k & \mathrm{in} & \Omega . 
\end{array}\right.
\end{displaymath}

Observe that for fixed $n,k$,   we have for almost all $x\in \Omega,
t\in [0,T]$ and  some $0<\theta(t,x)<1$ 
\begin{eqnarray*}
  f_{0}(x,u_n(t,x)) - f_{0}(x,u_k(t,x)) &=& 
\partial_u f_{0}(x,\theta(t,x) u_n(t,x) +(1-\theta(t,x)) u_k(t,x))
v_{n,k}(t,x) \\
&=&C_{n,k}(t,x) v_{n,k}(t,x)
\end{eqnarray*}
for some function $C_{n,k}(t,x)$. Notice that $C_{n,k}\in
L^\infty((0,T);L^{q_0}(\Omega))$ since $u_n$ and $u_k$ are smooth,
$f_{0}$ is locally Lipschitz in the second variable in an
$L^{q_0}(\Omega)$ manner. Also, from (\ref{eq:f:almostMonotonic}) we
have $C_{n,k}(x,t) \leq L(x)$ for all $t\geq 0$ and $x\in \Omega$.

Now, consider the linear  problem
\begin{equation}
\label{eq:auxProblem:z_C}
   \left\{\begin{array}{rclcl}
   z_t -\Delta z&=& \big( m(x) + C_{n,k}(t,x)\big) z & \mathrm{in} & \Omega\\
z &= & 0 &  \mathrm{on}  & \Gamma\\
  z(0) &=& z_0  & \mathrm{in} & \Omega
\end{array}\right.
\end{equation}
with $z_{0}$ smooth and denote by $z(t,0;z_0)$ the  solution  whose existence follows from
\cite{Robinson2007} or \cite{brezis96}). Such solutions satisfy by
comparison $z(t,0;-|z_{0}|) \leq  z(t,0;z_{0}) \leq z(t,0;|z_{0}|)$, 
i.e, 
\begin{displaymath}
  |z(t,0;z_{0})| \leq z(t,0; |z_{0}|)
\end{displaymath}
and the latter is a nonnegative solution of (\ref{eq:auxProblem:z_C}).

But for nonnegative initial data, $z_{0} \geq 0$, since $C_{n,k}(x,t) \leq 
L(x)$ for all $t\geq 0$ and $x\in \Omega$, we can
compare $z(t,0;z_{0}) \geq 0$ with the solutions of 
\begin{displaymath}
   \left\{\begin{array}{rclcl}
   w_t -\Delta w&=& \big( m(x) + L(x) \big) w & \mathrm{in} & \Omega\\
w &= & 0  & \mathrm{on}  & \Gamma\\
  w(0) &=& z_0  & \mathrm{in} & \Omega
\end{array}\right.
\end{displaymath}
to obtain $0\leq z(t,0;z_0) \leq w(t;z_0)$. 

Hence, we obtain that for any smooth initial data $z_{0}$ in
(\ref{eq:auxProblem:z_C}) we have 
 \begin{displaymath}
  |z(t, 0;z_0)|\leq w(t;|z_0|) \quad \mbox{for $t\geq 0$}. 
\end{displaymath}
In particular, 
\begin{displaymath}
\|z(t, 0; z_{0})\|_{L^q(\Omega)} \leq
\|w(t;|z_{0}|)\|_{L^q(\Omega)}\leq c \mathrm{e}^{-\lambda 
  t}\|z_0\|_{L^q(\Omega)}   
\end{displaymath}
where $\lambda$ is the first eigenvalue of
$-\Delta - (m(x)+ L(x))I$ on $\Omega$ with Dirichlet boundary conditions.

Now, $v_{n,k}$ is a solution of (\ref{eq:auxProblem:z_C}) and so
\begin{displaymath}
  \|v_{n,k}(t)\|_{L^q(\Omega)}
\leq c\mathrm{e}^{-\lambda t}\|v_{n,k}(0)\|_{L^q(\Omega)}
\end{displaymath}
for all $t\geq 0$. In particular, given $T>0$, we have that for any
$0\leq t\leq T$, 
\begin{displaymath}
  \|v_{n,k}(t)\|_{L^q(\Omega)} 
\leq  c(T)
  \|v_{n,k}(0)\|_{L^q(\Omega)} \to 0 \quad \textrm{as} \quad n,k\to\infty
\end{displaymath}
and so, $u_n$ is a Cauchy sequence in $C([0,T];L^q(\Omega))$. 

Hence,
there exists $u\in C([0,\infty);L^q(\Omega))$ such that for any $T>0$,
\begin{equation} \label{eq:convergence_2_vK_LrOmega} 
  \sup_{t\in [0,T]} \|u_n(t) - u(t)\|_{L^q(\Omega)}  \leq
  c(T)\|u_n^0 - u_0\|_{L^q(\Omega)}\to 0 \quad
  \textrm{as} \quad n\to\infty
\end{equation}
i.e, for any $T>0$, 
\begin{displaymath}
  u_{n} \to u \quad \mbox{in $C([0,T]; L^{q}(\Omega))$} . 
\end{displaymath}
In particular, passing to a subsequence if needed, $u_n(t,x) \to
u(t,x)$ as $n\to\infty$ a.e. for $(t,x)\in[0,T]\times \Omega$.

Also it is easy to see that $u$ does not depend on the sequence of
initial data, but only on $u_{0} \in L^{q}(\Omega)$.

\noindent {\bf Step 2. $L^\infty$-bound for the approximating sequence.} 
Let us  show now that the sequence $u_n(t)$ is uniformly bounded in 
$L^\infty(\Omega)$ with respect to $n$, for $0<\eps\leq t\leq T$. 

For this, since $f$ satisfies (\ref{eq:f:signCond}),  we will use the auxiliary problem
\begin{equation}
  \label{eq:pb:lin:bound}
   \left\{\begin{array}{rclcl}
    U_t  - \Delta U  & =
    & C(x)   U + D(x) & \mathrm{in} & \Omega\\
    U&=& 0 &
    \mathrm{on} & \Gamma\\
    U(0) & &  \textrm{given in   }L^{q}(\Omega) 
  \end{array}\right.
\end{equation}
with  $C= m + L \in L^{\sigma}(\Omega)$, $\sigma=\min\{r_{0},
\sigma_{0}\} > N/2$ and   $0\leq D=|g| \in L^{q_{0}}(\Omega)$, $q_{0}
> N/2$. 

 Denote by $U^{n}(t,x)$ the solution
of (\ref{eq:pb:lin:bound}) with initial data $|u_{0}^{n}|$ and by $U(t,x)$ the solution
of (\ref{eq:pb:lin:bound}) with initial data $|u_{0}|$. 

Now, using the variation of constants formula in
(\ref{eq:pb:lin:bound}) we have 
\begin{displaymath}
  U^n(t) = \Phi(t) + U^n_h(t), \quad  U(t) = \Phi(t) + U_h(t)
\end{displaymath}
where $U^n_h(t), U_h(t)$ are  the solutions of the homogeneous problem 
\begin{displaymath}
   \left\{\begin{array}{rclcl}
    V_t  - \Delta V  & =
    & C(x)V  & \mathrm{in} & \Omega\\
    V&=& 0 &
    \mathrm{on} & \Gamma\\
    V(0) & &  \textrm{given in   }L^{q}(\Omega) 
  \end{array}\right.
\end{displaymath}
resulting from
taking $D\equiv 0$ in (\ref{eq:pb:lin:bound}) and initial data
$|u_{0}^{n}|$ and $|u_{0}|$ respectively,  and $\Phi(t)$ is the unique solution
of problem (\ref{eq:pb:lin:bound}) with $U(0)=0$ (which does not depend on
$u_{0}^{n}$ or $u_{0}$), that is,
\begin{displaymath}
   \left\{\begin{array}{rclcl}
    W_t  - \Delta W  & =
    & C(x)W+ D(x) & \mathrm{in} & \Omega\\
    W&=& 0 &
    \mathrm{on} & \Gamma\\
    W(0) &= & 0 & \mathrm{in} & \Omega. 
  \end{array}\right.
\end{displaymath}
In other words $U^n_h(t) = S_{C}(t)|u_{0}^{n}| $, $U_h(t) =
S_{C}(t)|u_{0}|$ and $\Phi(t)= \int_{0}^{t} S_{C}(t-s) D \, \mathrm{d}s$ where $S_{C}$ denotes the semigroup generated by $\Delta+C(x) I$ with Dirichlet boundary
conditions.  Hence  standard estimates  implies that, for any $T>0$, 
\begin{displaymath}
  \|U^n(t)\|_{L^\infty(\Omega)} \leq c(T) \big( 1  +  
  t^{-\frac{N}{2q}}   \|u_{0}^{n}\|_{L^q(\Omega)} \big), \qquad 0 <t
  \leq T   , 
\end{displaymath}
\begin{displaymath}
  \|U^n(t)\|_{L^q(\Omega)} \leq c(T) \big( 1  +
  \|u_{0}^{n}\|_{L^q(\Omega)} \big), \qquad 0 \leq t
  \leq T   
\end{displaymath}
and 
\begin{eqnarray*}
  \|U^n(t) - U(t)\|_{L^\infty(\Omega)}
&=& \|U^n_h(t) - U_h(t)\| _{L^\infty(\Omega)}
= \|S_{C}(t)(|u_0^n| - |u_0|)\|_{L^\infty(\Omega)}\\
&\leq& c(T) t^{-\frac{N}{2q}} \||u_{0}^{n}| - |u_0|\|_{L^q(\Omega)} \to 0
\quad  \textrm{as} \quad n\to\infty 
\end{eqnarray*}
for $0 <t  \leq T$, and for $0 \leq t  \leq T$
\begin{displaymath}
  \|U^n(t) - U(t)\|_{L^q(\Omega)}
\leq c(T) \||u_{0}^{n}| - |u_0|\|_{L^q(\Omega)} \to 0 \quad
  \textrm{as} \quad n\to\infty.
\end{displaymath}
Therefore, for any $0<\eps < T <\infty$, 
\begin{displaymath}
  U^{n} \to U \quad \textrm{in} \ L^{\infty}([\epsilon,T] \times \Omega) \cap C([0,T]; L^{q}(\Omega))  . 
\end{displaymath}

Observe now that, since $f$ satisfies (\ref{eq:f:signCond}),
$U^{n}(t,x)$ is a supersolution of problem (\ref{eq:pb}) and 
$-U^n(t,x)$ is a subsolution.
Thus,
\begin{equation} \label{eq:bound_Linfty_u_n}
  |u_n(t,x)| \leq  U^{n}(t,x) \leq c(T)\big( 1 +    t^{-\frac{N}{2q}}
   \|u_{0}^{n}\|_{L^q(\Omega)}  \big), \qquad 0 <t \leq T,
\quad \textrm{a.e.\ in  }\Omega, 
\end{equation}
and so
\begin{displaymath}
\|u_n(t)\|_{L^\infty(\Omega)}\leq
c(\epsilon,T,\|u_{0}^{n}\|_{L^q(\Omega)}), 
\quad \epsilon\leq t \leq T.
\end{displaymath}

Now, since $u_0^n \to u_0$ in $L^q(\Omega)$ as $n\to\infty$ and the
convergences $U^{n}(t,x) \to U(t,x)$ and  $u_n(t,x) \to  v(t,x)$ obtained above (see
(\ref{eq:convergence_2_vK_LrOmega}))  we get  
\begin{equation}
  \label{eq:bound:vK:Linfty}
  |u(t,x)| \leq U(t,x) \leq c(T) \big(1 +    t^{-\frac{N}{2q}}
  \|u_{0}\|_{L^q(\Omega)}  \big)  ,
  \qquad 0 <t \leq T,  \quad  \textrm{for a.e.   } x \in\Omega.
\end{equation}

Now observe that the bounds above, the regularity of $u_{n}$ in
Theorem \ref{th:localExistence}  and  (\ref{eq:convergence_2_vK_LrOmega})
imply that for any $0< \epsilon < T < \infty$ and $1\leq s < \infty$,  
\begin{equation} \label{eq:convergence_2_vK_LqOmega} 
  \sup_{t\in [\epsilon,T]} \|u_n(t) - u(t)\|_{L^s(\Omega)}  
\to 0 \quad
  \textrm{as} \quad n\to\infty
\end{equation}
i.e, for any $T>0$ and $1\leq s < \infty$,
\begin{displaymath}
  u_{n} \to u \quad \mbox{in $C([\eps,T]; L^{s}(\Omega))$} . 
\end{displaymath}
In particular $u\in C((0,\infty);L^s(\Omega))$ for any $1\leq s <
\infty$.

\medskip 
\noindent {\bf Step 3. The limit is a solution of (\ref{eq:pb}).}
First, assume $0<\eps<t<T$. Then for any $\phi\in H^{2,q_0^\prime}(\Omega)\cap
H^{1,q_0^\prime}_0(\Omega)$, where $q_0^\prime$ is the conjugate of $q_0$, i.e,
$\frac{1}{q_0}+ \frac{1}{q_0^\prime}=1$ (as usual, for $q_0=1$ we take $q_0^\prime
=\infty$), we have from (\ref{eq:pb}) and the regularity if $u_{n}$ in Theorem
\ref{th:localExistence},
\begin{displaymath}
   \frac{\mathrm{d}}{\mathrm{d}t}  \int_\Omega u_n \phi +
   \int_\Omega u_n (-\Delta \phi)  
=\int_\Omega f(\cdot,u_n)\phi  = \int_\Omega g \phi  +
\int_\Omega m(x) u_n\phi + \int_\Omega f_{0}(x,u_n)\phi .  
\end{displaymath}

Now, using the uniform bounds in (\ref{eq:bound_Linfty_u_n}),
(\ref{eq:bound:vK:Linfty}) and the convergence in
(\ref{eq:convergence_2_vK_LqOmega}), and the fact that $f_{0}$ is locally
Lipchitz in its second variable in an $L^{q_0}(\Omega)$ manner, we have that for
$1\leq s \leq q_0$,
\begin{displaymath}
   f_{0}(\cdot,u_n) \to  f_{0}(\cdot,u) \quad \textrm{in} \quad C([\epsilon,T];
   L^{s}(\Omega)) .
\end{displaymath}

Hence,  letting  $n\to\infty$,we get 
\begin{displaymath}
  \frac{\mathrm{d}}{\mathrm{d}t}  \int_\Omega u \phi +
   \int_\Omega u (-\Delta \phi) 
=\int_\Omega f(\cdot,u)\phi = \int_\Omega g \phi  +
\int_\Omega m(x) u\phi + \int_\Omega f_{0}(x,u)\phi .   
\end{displaymath}

 Notice that  from \cite{Ball1977}, this implies 
\begin{equation}
  \label{eq:VCF:eps}
  u(t) = S_{m}(t-\epsilon)u(\epsilon) + \int_\epsilon^t S_{m}(t-s)h(s)\,\textrm{d}s
\end{equation}
where $S_{m}(t)$ denotes the strongly continuous analytic semigroup generated by
$\Delta + m(x)I$  with homogeneous Dirichlet boundary conditions, and
$h(\cdot) = g + f_{0}(\cdot, u(\cdot))\in
L^{\infty}([\epsilon,T];L^{q_{0}}(\Omega))$.

The smoothing effect of the semigroup gives that 
\begin{displaymath}
\int_\epsilon^t 
S_{m}(t-s)h(s)\,\textrm{d}s \in C([\eps,T];H^{2\gamma,q_{0}}(\Omega)
\cap H^{1,q_{0}}_0(\Omega)), \ \mbox{for any $\gamma <1$}, 
\end{displaymath}
 while  the continuity of the linear semigroup $S_{m}(t)$ at $0$ and
$u(\epsilon)\to u_{0}$ in $L^{q}(\Omega)$ as $\epsilon\to 0$, give, 
taking $\epsilon \to 0$ in (\ref{eq:VCF:eps}),
\begin{displaymath}
  \int_0^t S_{m}(t)h(s)\,\mathrm{d}s 
= \lim_{\epsilon\to 0} \int_\epsilon^t S_{m}(t-s)h(s)\,\textrm{d}s = u(t) -S_{m}(t)u_0.
\end{displaymath} 
Thus, 
\begin{displaymath}
  u(t) = S_{m}(t)u_0 + \int_0^t S_{m}(t-s) \big(  g + f_{0}(s,
  u(s)) \big) \,\textrm{d}s . 
\end{displaymath}

\medskip 
\noindent {\bf Step 4.  Further regularity.} 
From the smoothing effect of the semigroup $S_{m}(t)$ and the
regularity observed above, we have that for any $\eps >0$, $u(\eps) \in
 H^{2\alpha,q_{0}}(\Omega) \cap H^{1,q_{0}}_0(\Omega)$ for some
 $\alpha <1$ such that $2 \alpha - \frac{N}{q_{0}}>0$. 

Therefore, for $t\geq \eps$, $u(t)$ coincides with the unique solution
in Theorems  \ref{th:localExistence} and \ref{th:globalExist}. In
particular $u(t)$ is continuous in $\Omega$ and we can take $x\in
\overline{\Omega}$ in (\ref{eq:bound:vK:Linfty}). 
\end{proof}

\begin{corollary}
  \label{corol:smoothing_estimates}
  For $1\leq q < \infty$ and  $T>0$, we have that the   solution $u$
  in Theorem \ref{thr:truncated_problem_well_posed} satisfies,  for $q\leq
  p\leq\infty$, 
  \begin{displaymath}
    \|u(t)\|_{L^p(\Omega)} 
    \leq c(T) \left(1+
      t^{-\frac{N}{2}\left(\frac{1}{q}-\frac{1}{p}\right)} 
    \|u_0\|_{L^q(\Omega)}\right) \quad 0<t\leq T.
  \end{displaymath}
\end{corollary}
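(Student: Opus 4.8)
The plan is to avoid re-running any fixed-point or approximation argument and instead read the estimate off the linear comparison already set up in Step~2 of the proof of Theorem~\ref{thr:truncated_problem_well_posed}. Recall that there the limit solution obeys the pointwise bound $|u(t,x)|\le U(t,x)$ from (\ref{eq:bound:vK:Linfty}), where $U$ is the solution of the linear problem (\ref{eq:pb:lin:bound}) with data $|u_0|$, and that $U$ splits as $U(t)=\Phi(t)+S_C(t)|u_0|$, the piece $\Phi(t)=\int_0^t S_C(t-s)D\,\mathrm{d}s$ being independent of $u_0$. Since $U\ge 0$ and $|u(t,\cdot)|\le U(t,\cdot)$ a.e., monotonicity of the $L^p$ norm gives $\|u(t)\|_{L^p(\Omega)}\le \|\Phi(t)\|_{L^p(\Omega)}+\|S_C(t)|u_0|\|_{L^p(\Omega)}$, and it remains only to estimate the two pieces separately.

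For the forcing term $\Phi$: because $D=|g|\in L^{q_0}(\Omega)$ with $q_0>N/2$, the smoothing of $S_C$ makes $\Phi$ bounded in $L^\infty(\Omega)$ uniformly on $[0,T]$, so $\|\Phi(t)\|_{L^p(\Omega)}\le c(T)$ for all $0\le t\le T$ and every $p$; this is exactly the ``$1$'' already appearing in (\ref{eq:Linfty_bounds}). For the homogeneous term I would invoke the $L^q$--$L^p$ smoothing estimate for the analytic semigroup $S_C$ generated by $\Delta+C(x)I$ with $C\in L^\sigma(\Omega)$, $\sigma>N/2$, namely
\begin{displaymath}
\|S_C(t)|u_0|\|_{L^p(\Omega)}\le c(T)\,t^{-\frac{N}{2}\left(\frac1q-\frac1p\right)}\|u_0\|_{L^q(\Omega)},\qquad 0<t\le T,\ q\le p\le\infty,
\end{displaymath}
which are the same ``standard estimates'' already used in Step~2 (and available from the references cited there). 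Adding the two bounds yields the claim, and the endpoints are consistent: $p=\infty$ reproduces (\ref{eq:Linfty_bounds}), while $p=q$ gives $\|u(t)\|_{L^q(\Omega)}\le c(T)(1+\|u_0\|_{L^q(\Omega)})$.

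The step carrying all the analytic content is the $L^q$--$L^p$ estimate for $S_C$: one must control an analytic semigroup whose potential $C$ is unbounded (only $L^\sigma$ with $\sigma>N/2$), uniformly down to $t=0^+$ and up to $p=\infty$. This is standard but is the one place where genuine work, rather than bookkeeping, sits; everything else is the triangle inequality and monotonicity of norms. I would explicitly warn against the tempting shortcut of interpolating the $L^q$ bound ($p=q$) against the $L^\infty$ bound ($p=\infty$): since the singular factor $t^{-N/(2q)}$ in the $L^\infty$ bound is tied to $\|u_0\|_{L^q}$, H\"older interpolation $\|u\|_{L^p}\le\|u\|_{L^q}^{q/p}\|u\|_{L^\infty}^{1-q/p}$ produces a spurious standalone term $t^{-\frac N2(1/q-1/p)}$ that is \emph{not} multiplied by $\|u_0\|_{L^q}$, so it fails to give the sharp form stated. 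Returning to the linear decomposition $U=\Phi+S_C(t)|u_0|$ is precisely what keeps the singular factor attached to $\|u_0\|_{L^q}$.
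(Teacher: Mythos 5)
Your proof is correct, and it takes a genuinely different route from the paper's --- in fact, the paper's own proof is precisely the ``tempting shortcut'' you warn against, and your warning is well founded. The paper establishes $\|u(t)\|_{L^q(\Omega)}\le c(T)(1+\|u_0\|_{L^q(\Omega)})$ and $\|u(t)\|_{L^\infty(\Omega)}\le c(T)\big(1+t^{-N/(2q)}\|u_0\|_{L^q(\Omega)}\big)$ and then concludes by the H\"older interpolation $\|u\|_{L^p}\le\|u\|_{L^q}^{q/p}\|u\|_{L^\infty}^{1-q/p}$. Writing $M=\|u_0\|_{L^q(\Omega)}$, $s=t^{-N/(2q)}$, $\theta=q/p$, what interpolation of those two affine bounds actually yields is $c(T)(1+M)^{\theta}(1+sM)^{1-\theta}$, whose expansion contains the cross term $s^{1-\theta}M^{1-\theta}$; this carries a \emph{fractional} power of $M$ (your phrasing ``not multiplied by $\|u_0\|_{L^q}$'' is slightly imprecise --- the factor is $M^{1-\theta}$ rather than no factor at all --- but the defect you identify is real). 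That term is not dominated by $C(T)\big(1+s^{1-\theta}M\big)$ uniformly: take $\theta=\tfrac12$ and $M=s^{-1/2}$, so that
\begin{displaymath}
(1+M)^{1/2}(1+sM)^{1/2}\sim s^{1/4}\to\infty \quad\mbox{as } t\to 0^+, \qquad\mbox{while}\qquad 1+s^{1/2}M=2 .
\end{displaymath}
So the last displayed inequality in the paper's proof does not follow from the two bounds preceding it; nonlinear-bound interpolation only gives the weaker estimate with $\|u_0\|_{L^q(\Omega)}^{1-q/p}$, losing the stated form exactly in the regime of small data and small time. Your argument avoids this: going back to the linear majorant $U(t)=\Phi(t)+S_C(t)|u_0|$ of Step 2 and estimating the two pieces separately keeps the singular factor attached to the full norm $\|u_0\|_{L^q(\Omega)}$. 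The one ingredient you need beyond what is written in Step 2, the $L^q$--$L^p$ smoothing estimate for $S_C$, is legitimate and essentially free: $S_C(t)$ is a \emph{linear} operator, so the two endpoint estimates the paper already invokes ($L^q\to L^q$ and $L^q\to L^\infty$) give all intermediate ones by Riesz--Thorin (operator-norm interpolation is valid; it is interpolating affine bounds on the nonlinear solution that fails). In short, your proof is not only a different route but the one that actually delivers the corollary as stated.
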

\begin{proof}
  Following the argument in Step 2 in the proof of Theorem
  \ref{thr:truncated_problem_well_posed}, we can bound the
  approximating sequence $u_n$ using the bound
  provided by the linear problem (\ref{eq:pb:lin:bound}) to get
  \begin{displaymath}
    \|u_n(t)\|_{L^q(\Omega)}\leq \|U^n(t)\|_{L^q(\Omega)}
\leq c(T) \big( 1 +  \|u_0\|_{L^q(\Omega)}\big) , \quad 0\leq t\leq T,
\end{displaymath}
  for all $n\geq 0$. Therefore, since $u\in C([0,T];L^q(\Omega))$ is
  the limit of $u_n$ in $C([0,T];L^q(\Omega))$ as $n\to \infty$, we have
  \begin{displaymath}
    \|u(t)\|_{L^q(\Omega)}\leq c(T) \big( 1 +
    \|u_0\|_{L^q(\Omega)}\big) , \quad 0\leq t\leq T. 
  \end{displaymath}
From (\ref{eq:bound:vK:Linfty}) we also have that
  \begin{displaymath}
   \|u(t)\|_{L^\infty(\Omega)}\leq c(T)
    \left(1+t^{-\frac{N}{2q}} \|u_0\|_{L^q(\Omega)} \right), \quad 0< t\leq T. 
  \end{displaymath}
Thus, by interpolation
\begin{displaymath}
  \|u(t)\|_{L^p(\Omega)}\leq \|u(t)\|^{\frac{q}{p}}_{L^q(\Omega)} 
\|u(t)\|^{1-\frac{q}{p}}_{L^\infty(\Omega)} 
\leq c(T)
\left(1+t^{-\frac{N}{2}\left(\frac{1}{q}-\frac{1}{p}\right)}
\|u_0\|_{L^q(\Omega)} \right), \quad 0< t\leq T.  
\end{displaymath}
\end{proof}

Let show now that the solutions of (\ref{eq:pb}) in $L^q(\Omega)$ are
unique for $1\leq q<\infty$. 

\begin{theorem}\label{thr:uniqueness}
  Given $u_{0} \in L^{q}(\Omega)$, $1\leq q < \infty$, there exists a unique
  function
\begin{displaymath}
  v \in
  C([0,\infty);L^q(\Omega))\cap L^\infty_{\mathrm{loc}}((0,\infty);L^\infty(\Omega)), \quad    v(0) = u_{0} 
\end{displaymath}  
that satisfies  
\begin{equation} \label{eq:VCF_truncated_problem}
    v(t) = S_{m}(t)u_0+ \int_0^t S_{m}(t-s) \big( g +
    f_{0}(\cdot,v(s)) \big)    \,\textrm{d}s, \qquad t\geq 0  
\end{equation}
where $S_{m}$ denotes the semigroup generated by $\Delta+m(x) I$ with Dirichlet boundary
conditions.

Therefore the function $u(\cdot)$ constructed in Theorem
\ref{thr:truncated_problem_well_posed} is the unique 
function satisfying this. 
\end{theorem}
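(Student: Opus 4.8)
The plan is to get existence for free from Theorem \ref{thr:truncated_problem_well_posed} and to spend all the work on uniqueness. For existence, the function $u$ constructed there lies in $C([0,\infty);L^q(\Omega))\cap C((0,\infty);H^{2,q_{0}}(\Omega)\cap H^{1,q_{0}}_0(\Omega))$, satisfies $u(0)=u_0$, and obeys the variation of constants formula (\ref{eq:VCF:th:exists}), which is precisely (\ref{eq:VCF_truncated_problem}). Since $q_0>N/2$ yields the embedding $H^{2,q_0}(\Omega)\hookrightarrow L^\infty(\Omega)$, we have $u\in L^\infty_{\mathrm{loc}}((0,\infty);L^\infty(\Omega))$, so $u$ belongs to the required class. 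It therefore only remains to prove that any two functions in that class satisfying (\ref{eq:VCF_truncated_problem}) with the same initial datum must agree.

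So suppose $v_1,v_2$ are two such functions and set $w=v_1-v_2$. Subtracting the two copies of (\ref{eq:VCF_truncated_problem}), both $S_{m}(t)u_0$ and the contribution of $g$ cancel, leaving
\begin{displaymath}
  w(t)=\int_0^t S_{m}(t-s)\big(f_{0}(\cdot,v_1(s))-f_{0}(\cdot,v_2(s))\big)\,\mathrm{d}s .
\end{displaymath}
Exactly as in Step 1 of the proof of Theorem \ref{thr:truncated_problem_well_posed}, the mean value theorem and the almost-monotonicity (\ref{eq:f:almostMonotonic}) allow one to write $f_{0}(\cdot,v_1(s))-f_{0}(\cdot,v_2(s))=C(s,\cdot)\,w(s)$ for a measurable $C$ with $C(s,x)\leq L(x)$ for all $s$ and a.e.\ $x$. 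Hence $w$ is a mild solution of the linear nonautonomous problem with potential $m(x)+C(s,x)$ and zero initial datum, i.e.\ of a problem of the type (\ref{eq:auxProblem:z_C}).

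The crucial point is that the one-sided bound $C\leq L$ holds uniformly in $s$, in particular near $t=0$ where $v_1,v_2$ are merely in $L^q(\Omega)$; this is exactly what almost-monotonicity provides, and it is the heart of the argument. I would first fix an interval $[\delta,T]$ with $\delta>0$. There $v_1,v_2\in L^\infty(\Omega)$, so the local Lipschitz assumption gives $C\in L^\infty((\delta,T);L^{q_0}(\Omega))$ and the comparison of Step 1 applies verbatim: $|w(t)|\leq W(t)$, where $W$ solves the problem with potential $m+L$ and initial datum $|w(\delta)|$, whence
\begin{displaymath}
  \|w(t)\|_{L^q(\Omega)}\leq c\,\mathrm{e}^{-\lambda(t-\delta)}\|w(\delta)\|_{L^q(\Omega)},\qquad \delta\leq t\leq T,
\end{displaymath}
with $\lambda$ the first Dirichlet eigenvalue of $-\Delta-(m+L)I$ on $\Omega$. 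The restricted formula on $[\delta,T]$ is obtained by splitting the integral above at $s=\delta$ and invoking the semigroup property, so that $w(\delta)$ plays the role of initial datum.

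Finally I would let $\delta\to0$. For fixed $t\leq T$ the factor $c\,\mathrm{e}^{-\lambda(t-\delta)}$ stays bounded by $c\,\mathrm{e}^{|\lambda|T}$, uniformly in $\delta\in(0,t]$, while $w\in C([0,\infty);L^q(\Omega))$ with $w(0)=v_1(0)-v_2(0)=0$ forces $\|w(\delta)\|_{L^q(\Omega)}\to0$. Since the left-hand side does not depend on $\delta$, we conclude $\|w(t)\|_{L^q(\Omega)}=0$ for every $t>0$, and hence $w\equiv0$ by continuity, giving uniqueness. The main obstacle is precisely the low regularity of the solutions as $t\to0^+$, which rules out a direct Lipschitz/Gronwall estimate on all of $[0,T]$; the uniform one-sided bound from almost-monotonicity together with the $\delta\to0$ limiting argument is what circumvents it.
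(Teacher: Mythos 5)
Your proposal is correct and follows essentially the same route as the paper: use the mean value theorem and the one-sided bound $\partial_u f_0\leq L$ to write the difference as a solution of a linear problem with potential $m+C$, $C\leq L$, compare with the problem with potential $m+L$ to get an $L^q$ estimate on $[\delta,T]$ whose constant is independent of $\delta$, and then let $\delta\to 0$ using continuity at $0$ and equality of the initial data. The only minor difference is that the paper first uses the smoothing effect of $S_m$ (and the assumption $v\in L^\infty_{\mathrm{loc}}((0,\infty);L^\infty(\Omega))$, which puts $g+f_0(\cdot,v(\cdot))$ in $L^\infty([\delta,T];L^{q_0}(\Omega))$) to upgrade $v$ on $[\delta,T]$ to a regular solution as in Theorem \ref{th:localExistence} before invoking the Step-1 comparison; you should include this step (or a uniqueness argument for bounded mild solutions of the linear problem) to justify applying that comparison \emph{verbatim} to what is a priori only a mild solution.
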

\begin{proof} 
  Notice that the function $u$ constructed in Theorem
  \ref{thr:truncated_problem_well_posed} satisfies the assumptions
  above. So, let $v$ also satisfy the statement of
  the theorem. Then from (\ref{eq:VCF_truncated_problem}) we have
  that, for any $\eps>0$, 
  \begin{displaymath}
      v(t) = S_{m}(t-\epsilon)v(\epsilon) + \int_\epsilon^t S_{m}(t-s)
      \big( g +
    f_{0}(\cdot,v(s)) \big)\,\textrm{d}s . 
  \end{displaymath}

From the assumptions on $v$ we have that $h(s) = g +
f_{0}(\cdot,v(s))$ satisfies, for any $T>0$, that $h\in
L^{\infty}([\epsilon,T];L^{q_{0}}(\Omega))$. Then, the  smoothing
effect of the semigroup gives that  
\begin{displaymath}
v \in C([\eps,T];H^{2\gamma,q_{0}}(\Omega)
\cap H^{1,q_{0}}_0(\Omega)), \ \mbox{for any $\gamma <1$}. 
\end{displaymath}
Hence, for $t\geq \eps$,  $v$ is a solution as in Theorem
\ref{th:localExistence}.

Hence, arguing as in (\ref{eq:convergence_2_vK_LqOmega}) we have 
\begin{displaymath}
\sup_{\epsilon\leq t\leq T}\| u(t) - v(t)\|_{L^q(\Omega)} 
\leq c(T) \|u(\epsilon) - v(\epsilon)\|_{L^q(\Omega)} 
\end{displaymath}
with $c(T)$ not depending on $\epsilon$.

The continuity of $u$ and $v$ at $0$ in $L^{q}(\Omega)$, and the fact
that $u(0)=v(0)$ imply $u=v$.
\end{proof}

\section{Final remarks and examples}

(i) Note that Theorems \ref{thr:truncated_problem_well_posed} and
\ref{thr:uniqueness} allow to define a strongly continuos nonlinear semigroup  in
$L^{q}(\Omega)$ as 
\begin{displaymath}
  S(t) u_{0} = u(t;u_{0}), \qquad t\geq 0
\end{displaymath}
where $u(t;u_{0})$ is the solution in Theorem
\ref{thr:truncated_problem_well_posed}. 

The asymptotic behavior of this semigroup is the same as the semigroup obtained
for more regular initial data from Theorems \ref{th:localExistence} and
\ref{th:globalExist}. In fact, from (\ref{eq:Linfty_bounds}) we get that for any
$0< \eps < T < \infty$ and for any bounded set of initial data $B \subset
L^{q}(\Omega)$ we get that
\begin{displaymath}
  \{S(t)B, \ \eps\leq t \leq T\} \quad \mbox{is bounded in
    $L^{\infty}(\Omega)$} . 
\end{displaymath}
This implies, in turn that 
\begin{displaymath}
 \{ g + f_{0}(\cdot,u(t;u_{0})), \ \eps\leq t \leq T, \ u_{0} \in B\}
 \quad \mbox{is bounded in $L^{q_{0}}(\Omega)$} 
\end{displaymath}
and again the smoothing effect of the semigroup implies that 
\begin{displaymath}
  \{ S(t) B,  \ \eps\leq t \leq T\}
 \quad \mbox{is bounded in $H^{2\gamma,q_{0}}(\Omega)
\cap H^{1,q_{0}}_0(\Omega))$}
\end{displaymath}
for any $\gamma <1$.

\bigskip

\noindent (ii) Notice that the proofs in \cite{Arrieta:2004p863},
\cite{cholewa09:_extrem} and \cite{Rodriguez-Bernal2002} are based on energy
estimates of the approximating solutions while the proof presented above is
based on the maximum principle, in the form of the comparison principle. In
particular, for the case of posing the problem in $L^1(\Omega)$, this avoid the
use of Kato's inequality providing a unified argument. The equivalence between
Kato's inequality and positive semigroups has been established in
\cite{arendt84:_katos}.

\bigskip

\noindent (iii) The standard theory for semilinear
reaction-diffusion equations requires $f$ to satisfy some growth restriction in
order to obtain a well-posed problem in $L^{q}(\Omega)$. Namely, the equation
(\ref{eq:pb}) is locally well posed provided $f$ satisfies
  \begin{equation}
    \label{eq:f:growth:restr}
    |f(x,t) - f(x,s)| \leq C(1 + |s|^{p-1} + |s|^{p-1})|t-s|, \quad t,s\in \R
  \end{equation}
  for all $x\in\Omega$,
  with
  \begin{displaymath}
  p \leq p_{c} = 1 + {2q \over N}
  \quad \left(\mathrm{i.e,\ for\ any}\quad q\geq q_C=\frac{N(p-1)}{2}\right).
\end{displaymath}

Notice that although the uniqueness in $L^{q}(\Omega)$, for $q>q_\mathrm{C}$,
when $f$ satisfies the growth restriction (\ref{eq:f:growth:restr}), follows
from with subcritical nonlinearities, the proof of Theorem \ref{thr:uniqueness}
does not use any growth restriction on the nonlinear term (other than the fact
of being almost-monotonic).

\bigskip

\noindent (iv) Theorems \ref{thr:truncated_problem_well_posed} and
\ref{thr:uniqueness} extend to problems in unbounded domains in a natural way
(see \cite{Arrieta:2004p863}).  Also, the same techniques can be applied to
obtain solutions in $\R^N$ in any $L^q_U(\Omega)$, locally uniform $L^q$ space,
see \cite{cholewa09:_extrem} for a proof based on energy estimates.  In the case
of initial data in $L^1_U(\Omega)$, $L$ in (\ref{eq:f:almostMonotonic}) was
required to be bounded. By the techniques presented in here, no additional
restriction is required on $L$ in order to obtain a solution.

\bigskip

\noindent (v) In \cite{marcus99:_initial}, positive solutions of equation $u_t -
\Delta u = -|u|^{p}$ with measures as initial data is considered. In
particular, for positive $L^1$ densities, the solution is unique. We have shown
that this uniqueness also holds for general $L^1$ initial data (with no
assumption on their sign).

\bigskip

\noindent (vi) An example of nonlinearity for which all the previous results
apply are the following:
\begin{displaymath}
  f_0(x,u) = \sum_{j=1}^k n_j(x) h_j(u) + f_1(x,u)
\end{displaymath}
with $h_j \in C^1(\R)$, $h_j(0) = h^\prime_j(0) = 0$, $j=1,\dots,k$,
and $f_1(x,s)$ is a H\"older continuous with respect to $x$ uniformly for
$s$ in bounded sets of $\R$, $\partial_s f_1(x,s)$ is bounded in
$x$ for $s$ in bounded sets of $\R$ and $f_1(x,0) = \partial_s
f_1(x,0) = 0$, $x\in \Omega$.
This includes in particular the following cases, taking $f_1\equiv 0$:
\begin{itemize}
\item \emph{Logistic equation}

\begin{displaymath}
  f_{0}(x,u)=  - n(x) |u|^{\rho -1} u
\end{displaymath}
with $n(x)$ a nonnegative $L^r(\Omega)$ function, not identically
zero, and $\rho >1$. In this case, $L_0(x,R) = \rho R^{\rho-1} n(x) $ and we
can always take $L\equiv 0$ in (\ref{eq:f:almostMonotonic}).

\item  Monotone polynomial nonlinearity
\begin{displaymath}
  f_{0}(x,u)=  \sum_{j=2}^{k} n_{j}(x) u^{j} 
\end{displaymath}
with $k$ odd and $n_{j}\in L^{d_j}(\Omega)$, $d_j>N/2$, $1\leq j\leq k$ and
$n_{k}(x) \leq a_{0} <0$ for all $x\in \Omega$. In this case, we can take
\begin{displaymath}
L_0(x,R) = \sum_{j=2}^{k} (j-1) R^{j-1}|n_{j}(x)|
\end{displaymath}
and
\begin{displaymath}
L(x) =k
\left[\max_{u\in \R}\sum_{j=2}^{k} n_{j}(x) u^{j-1} \right]^+,
\end{displaymath}
where $[g(x)]^+ = \max\{g(x),0\}$.  Notice that $L,L_0 \in L^{q_0}
(\Omega)$ with $q_0 = \min\{d_1,\dots,d_k\}$.

\item Polynomial nonlinearity with fractional powers 
\begin{displaymath}
  f_{0}(x,u)=  \sum_{j=1}^{k} n_{j}(x) |u|^{\rho_{j} -1} u  
\end{displaymath}
with $1< \rho_{j} < \rho_{k}$ and $ n_j \in L^{d_j}(\Omega)$, $d_j >N/2$, $1\leq
j \leq k$, and $n_k(x)\leq a_0<0$, $x\in \Omega$. We can take  $L$ and
$L_0$ analogous to the previous example. 

\end{itemize}

 \bibliographystyle{plain}

\end{document}